\newtheorem{theorem}{Theorem}
\newtheorem{definition}{Definition}
\theoremstyle{remark}
\newtheorem{remark}{Remark}
\newcommand{\ee}[1]{\mathrm{e}^{#1}}
\newcommand{\nth}{n\textsuperscript{th}}
\newcommand{\partialO}[2]{\frac{\partial #1}{\partial #2}}
\newcommand{\partialT}[2]{\frac{\partial^2 #1}{\partial #2 ^2}}
\subjclass{60J70, 60J25, 60J27, 60K35, 82C31, 91G20}
\keywords{survival function, first passage time problem, order statistics, many-body system, $\nth$-to-default CDS, single file diffusion, multivariate Poisson process}
\title[$\nth$ survival function and $\nth$ first passage time distribution]{A recursive formula for the $\nth$ survival function and the $\nth$ first passage time distribution for jump and diffusion processes. Applications to the pricing of $\nth$-to-default CDS.}
\author{Alessio Lapolla}
\date{}
\email{alessiolapolla@gmail.com}
\begin{document}
\maketitle
\begin{abstract}
  We derive some rather general, but complicated, formulae to compute the survival function and the first passage time distribution of the $\nth$ coordinate of a many-body stochastic process in the presence of a killing barrier. First we will study the case of two coordinates and then we will generalize the results to three or more coordinates. Even if the results are difficult to implement, we will provide examples of their use applying them to a physical system, the single file diffusion, and to the financial problem of pricing a $\nth$-to-default credit default swap ($\nth$-CDS).
\end{abstract}
\section{Introduction}
The first passage time problem refers to the characterization of statistics concerning the first time a stochastic process interacts with a boundary. This intuitive definition allowed its application to the most diverse fields like: finance~\cite{brigo_interest_2006,glasserman_monte_2010}, neuroscience~\cite{bachar_stochastic_2013}, physics~\cite{redner_guide_2007, bray_persistence_2013}, psychology~\cite{navarro_fast_2009}, biology~\cite{lawley_distribution_2020}, etc\ldots In the case of a single process (in many dimensions as well) several studies have been and are being performed investigating a great varieties of models, on the other hand studies on systems with two or more coordinates are less common. One of the first fully general results involves the two-dimensional correlated Wiener process~\cite{domine_first_1993, kou_first-passage_2016, sacerdote_first_2016}, even for this model (arguably the most important and fundamental) the solution is rather complicated. Monte Carlo simulations are a powerful and adaptable tool for the investigation of first passage time problems. However, they sometimes suffer of discretization errors~\cite{kloeden_numerical_1992}, therefore the investigation into alternative calculation methods has some practical applications as well.\\
Given a probability space $(\Omega,\mathcal{F},\mathbb{P})$ with right-continuous complete filtration $\mathcal{F}_t$ that supports a stochastic process $\mathbf{X}_t$, describing $N$ different coordinates, on the measurable state space $(R,\Sigma)$; a first passage time problem requires the existence of several hitting times  $\tau_i=\inf\{t>0 | E(\mathbf{X}_{it},\partial R)\}$ with respect to the filtration, one for each coordinate forming $\mathbf{X}_t$. Where the writing $E(\mathbf{X}_{it},\partial R)$ represent a general event involving both the process and the boundary of the measurable space $\partial R$.\\
The boundary can be classified in three different categories~\cite{sacerdote_first_2016}:
\begin{enumerate}
\item killing: when the coordinate that hits the boundary disappears and does not contribute anymore to the system's dynamics;
\item absorbing: when the coordinate hitting the boundary sticks to it but continues interacting with the others;
\item crossing: when the system's dynamics is not affected by the hitting.
\end{enumerate}
In this article we will only treat the case of a killing boundary and we will derive an exact formula for the survival function and the first passage time distribution of the $\nth$ coordinate being killed. Or, in other words, we will derive an order statistics for the first passage time distributions of a many-body system~\cite{majumdar_extreme_2020}. This setup is peculiar; at each hitting time the ``equations of motion'' of the system change since the killed coordinate does not interact anymore with the others and a dimensionality reduction restricts the state space.\\ 
We will first analyze the simpler case with only two coordinates and we will apply our results to both a jump and a diffusion process, the latter process, the single file diffusion, is a classic problem in statistical physics. Then we will generalize to many coordinates and will use our results to price a $\nth$-to-default credit default swap ($\nth$-CDS).

\section{The two coordinates case}
The simplest case is the two coordinate system. Let $(\Omega,\mathcal{F}, \mathcal{F}_t, \mathbb{P})$ be a probability space on which we define the two coordinates continuous time c\`adl\`ag Markovian stochastic process $\mathbf{X}_t=(X_{1t},X_{2t})$ taking values in $R^2=R\times R$ with boundary $\partial R$. For example we can assume $R$ to be a subset of $\mathbb{R}$, like $\mathbb{N}$ or an interval, and $\partial R$ to be a natural number or the extremes of the interval. $R$ could also be a multi-dimensional space like a ball in $\mathbb{R}^n$, the boundary $\partial R$ could be its surface (the hyper-sphere), and thus both $X_{1t}$ and $X_{2t}$ would be multi-dimensional processes in this case. Then let
  \begin{align}
    &\tau_1=\inf\{t>0| X_{1t} \in \partial R\}, &\tau_2=\inf\{t>0|X_{2t} \in \partial R\},
  \end{align}
  if we assume that a coordinate is killed upon reaching the boundary, we can define two additional hitting times:
  \begin{align}
    &\tau_m=\inf\{t>0 |\tau_1\wedge \tau_2\}, \\
    &\tau_M=\inf\{t>0 |\tau_1\vee \tau_2\}= \\
    &\inf\{t>0| (X_{1t} \in \partial R\ \cap \tau_1\geq\tau_2) \cup (X_{2t}\in \partial R \cap \tau_2>\tau_1)\}\nonumber
  \end{align}
  where $\tau_m$ represents the first time one of the coordinates hits the boundary while $\tau_M$ the time the last coordinate is finally killed. Note that, in general, we assume that the two coordinates somewhat interact via a potential, via a copula structure, or via additional random processes. Therefore, the system is just described by a single coordinate $X_{rt}$  whose evolution is independent from the other coordinate for $t>\tau_m$. We always assume that all hitting times exist, are finite, and are inaccessible stopping times under the filtration $\mathcal{F}_t$.\\
  We define with $P^2(\mathbf{X},t|\mathbf{X}_0,t_0)$ the transitional probability density or mass (depending on the system we are considering), that is the probability of finding the system in $\mathbf{X}$ at time $t$ conditioned on it being in $\mathbf{Y}$ at time $t_0$ (for the rest of the article $t_0=0$). Then the survival function of both $X$s $S^2(t)=\mathbb{P}(t\leq\tau_m)$ is
  \begin{equation*}
    S^2(t)=\int_{R^2}d\mathbf{X}P^2(\mathbf{X},t|\mathbf{X}_0,t_0)\quad \text{or}\quad S^2(t)=\sum_{\mathbf{X}\in R}P^2(\mathbf{X},t|\mathbf{X}_0,t_0)
  \end{equation*}
  and the first passage time distribution can be computed via the time derivative $F^2(t)=-\frac{dS^2(t)}{dt}$~\cite{redner_guide_2007}, assuming that the derivative exists. These functions completely describe the system up to time $\tau_m$, the first results we will soon present provide formulae for describing the system up to $\tau_M$. Before proceeding, we define the following functions:
  \begin{itemize}
  \item the single coordinate transitional probability density or mass function for the $i=1,2$ coordinate: $P^1_i(x,t|y,s),\,s<t$;
  \item the marginal transitional probability density or mass of a coordinate for $s<t<\tau_m$: $P^2_i(x,t|y,s)$;
  \item the conditional probability density or mass of the $i$-coordinate conditioned on the killing of the other at time $t$: $P^2_i(x,t|X_j \in \partial R,t)$;
  \item the marginal survival probability $S_i(t)$ and the marginal first passage time density (if it exists) $F_i(t)$ for $t<\tau_m$.
  \end{itemize}
  All these functions are assumed to exist and to be computable analytically or at least numerically. The first result is the following: 
  
  \begin{theorem}
    \label{theo:two}
    Assuming that all $P$s are probability density function and that $S_1(t)$ and $S_2(t)$ are absolutely continuous, then the last particle survival function $S^1(t)$ can be computed using the following formula:
    \begin{equation}
      \label{eq:S1_general}
      \begin{aligned}
        &S^1(t)=S^2(t)+\int_Rdx\int_0^tds \int_Rdy P^1_1(x,t|y,s)P^2_1(y,s|X_2\in \partial R,s)F_{2}(s)+\\
        &\int_Rdx\int_0^tds \int_Rdy P^1_2(x,t|y,s)P^2_2(y,s|X_1\in \partial R,s)F_{1}(s),
      \end{aligned}
    \end{equation}
    and $F^1(t)=-\frac{dS^1(t)}{dt}$.
  \end{theorem}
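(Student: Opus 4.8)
The plan is to read $S^1(t)=\mathbb{P}(t\le\tau_M)$ as the probability that \emph{at least one} coordinate is still alive at time $t$, and to derive \eqref{eq:S1_general} by partitioning this event according to how many coordinates have survived. Since $\{t\le\tau_M\}$ is the disjoint union of the event that both coordinates are alive at $t$ and the event that exactly one is alive at $t$, I would first write
\begin{equation*}
  S^1(t)=\mathbb{P}(t\le\tau_m)+\mathbb{P}(\text{exactly one coordinate alive at }t).
\end{equation*}
The first term is $S^2(t)$ by definition. The absolute continuity of $S_1,S_2$ together with the inaccessibility of the stopping times should give $\mathbb{P}(\tau_1=\tau_2)=0$, so that the two sub-events ``only $X_1$ alive'' and ``only $X_2$ alive'' are themselves disjoint and carry no overlapping mass; it then suffices to compute one of them and to add the symmetric contribution.

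Next I would treat the event that only $X_1$ survives to $t$, that is $\tau_2=\tau_m=s<t$ while $\tau_1>t$. The idea is to condition on the first killing time $s$ and on the position of the surviving coordinate at that instant, and then to invoke the strong Markov property at the stopping time $\tau_m$. For $s<\tau_m$ the two coordinates still interact, so the density that $X_2$ is the first to reach $\partial R$, at time $s$, is precisely the marginal first passage density $F_2(s)$; conditionally on this killing, the survivor $X_1$ sits at a point $y$ with law $P^2_1(y,s\,|\,X_2\in\partial R,s)$, which serves as the initial datum for its subsequent free evolution. This is the step I expect to be the main obstacle: one must justify rigorously that the strong Markov property applies at $\tau_m$ and that conditioning on the killing event reproduces exactly the stated conditional density, with no residual correlation between the killing mechanism and the survivor's position beyond what $P^2_1(\,\cdot\,|\,X_2\in\partial R,s)$ already encodes.

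After the first killing the surviving coordinate evolves alone, governed by the single-coordinate transition density $P^1_1(x,t\,|\,y,s)$, and requiring $X_1$ to remain in $R$ up to time $t$ amounts to integrating its final position over $x\in R$. Collecting the contributions, the probability that only $X_1$ is alive at $t$ becomes
\begin{equation*}
  \int_R dx\int_0^t ds\int_R dy\,P^1_1(x,t\,|\,y,s)\,P^2_1(y,s\,|\,X_2\in\partial R,s)\,F_2(s),
\end{equation*}
where the $ds$-integral ranges over all admissible first-killing times and the $dy$-integral over all survivor positions at that time. Adding the symmetric term, with $X_1$ killed first and $X_2$ surviving, yields the second integral of \eqref{eq:S1_general}, and combined with $S^2(t)$ this gives the claimed identity. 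Finally $F^1(t)=-dS^1(t)/dt$ follows by differentiation, the derivative being well defined under the assumed absolute continuity of the marginal survival functions.
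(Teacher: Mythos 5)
Your proposal follows essentially the same route as the paper's proof: the decomposition $S^1(t)=\mathbb{P}(t\le\tau_m)+\mathbb{P}(\tau_m<t<\tau_M)$, conditioning on the first killing time against the marginal first passage density $F_j(s)$, conditioning on the survivor's position via $P^2_i(y,s\,|\,X_j\in\partial R,s)$, and propagating with the single-coordinate kernel $P^1_i$ by the Markov property before integrating the terminal position over $R$. The one point you should correct is the claim that absolute continuity of $S_1$ and $S_2$ (plus inaccessibility) forces $\mathbb{P}(\tau_1=\tau_2)=0$. This is false in general: the paper's own bivariate Poisson example with barrier at $M=1$ is a Marshall--Olkin-type model in which each marginal survival function is absolutely continuous and yet the diagonal $\{\tau_1=\tau_2\}$ carries positive probability (both coordinates killed by a common jump of $Y_{12}$). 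The claim is also unnecessary for your argument: the events ``only $X_1$ alive at $t$'' and ``only $X_2$ alive at $t$'' are disjoint by definition, and on $\{\tau_1=\tau_2\}$ one has $\tau_m=\tau_M$, so any surviving mass from a simultaneous killing already sits inside the first term $S^2(t)$ and contributes nothing to $\mathbb{P}(\tau_m<t<\tau_M)$; this is exactly how the paper disposes of the diagonal, and you should phrase it that way rather than assuming the diagonal is null. With that repair your argument is sound and coincides with the paper's.
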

  \begin{proof}
    By definition the law of either coordinate $1$ or $2$ is described by $P^2_i(x,t|x_0,s)$ for $s<t\leq\tau_m$ and $i=1,2$; while the law of the last coordinate remaining by $P^1_1(x,t|x_0,s)$ if $\tau_2<\tau_1$ or $P^1_2(x,t|x_0,s)$ if $\tau_2>\tau_1$ for $t>s>\tau_m$. Therefore the probability of survival of the last particle is composed by two mutually exclusive and complementary events:
    \begin{equation}
      S^1(t)=\mathbb{P}(t<\tau_M)=\mathbb{P}(t\leq\tau_m)+\mathbb{P}(\tau_m<t<\tau_M)=S^2(t)+\mathbb{P}(\tau_m<t<\tau_M),
    \end{equation}
    where the first addend encompasses also the case $\mathbb{P}(\tau_1=\tau_2)$, even when this probability is larger than zero. The second addend is in turn formed by two mutually exclusive events, and thanks to the law of total probability:
    \begin{equation}
      \label{eq:total_prob}
      \begin{aligned}
        \mathbb{P}(\tau_m<t<\tau_M)=&\mathbb{P}(\tau_m<t<\tau_M|\tau_2=\tau_m)\mathbb{P}(\tau_2=\tau_m)+\\
        &\mathbb{P}(\tau_m<t<\tau_M|\tau_1=\tau_m)\mathbb{P}(\tau_1=\tau_m).
      \end{aligned}
    \end{equation}
    Now, since the first killing happened at a certain time $s<\tau_m<t$ we have that
    \begin{equation}
      \mathbb{P}(\tau_2=\tau_m)=\mathbb{P}(t>\tau_2)=1-\mathbb{P}(t\leq\tau_2)=1-S_2(t)
    \end{equation}
    is the default probability of $X_2$, and
    \begin{equation}
      \begin{aligned}
        &\mathbb{P}(\tau_m<t<\tau_M|\tau_2=\tau_m)=\frac{\int_0^tds \mathbb{P}(s<t<\tau_M|\tau_2=s)F_{2}(s)}{1-\mathbb{P}(t\leq\tau_2)}=\\
        &\frac{\int_0^tds \int_Rdy \mathbb{P}(t<\tau_M|y,s)P^2_1(y,s|X_2 \in \partial R,s)F_{2}(s)}{1-\mathbb{P}(t\leq\tau_2)}=\\
        &\frac{\int_Rdx\int_0^tds \int_Rdy P^1_1(x,t|y,s)P^2_1(y,s|X_1 \in \partial R,s)F_{2}(s)}{1-\mathbb{P}(t\leq\tau_2)};
      \end{aligned}
    \end{equation}
    where we computed the conditional average over the killing time of $X_2$ in the first equality, used the Markov property in the second, and used the definition of survival function in the third. The same applies\emph{ mutatis mutandis} to the second added of Eq.~\eqref{eq:total_prob}. Thus, putting all together, we finally obtain Eq.~\eqref{eq:S1_general}.
  \end{proof}
  \begin{remark}
    In the case in which we are dealing with probability masses the two integrals over $R$ become sums, while if the marginal survival functions fail to be absolutely continuous then the time integral needs to be rewritten as a Lebesgue–Stieltjes integral in Eq.~\eqref{eq:S1_general}.
  \end{remark}
Eq.~\eqref{eq:S1_general} has an intuitive explanation. The time the second coordinate is killed is given by the time the first coordinate is killed (the first addend) plus the time the second coordinate needs to reach the boundary. This last time is given by the average over the time the first coordinate is killed and the average over the position of the remaining particle at the first killing time (the symmetric second and third addends).

  \subsection{A jump process example}
  The fundamental jump process is the homogeneous Poisson process with intensity $\lambda$ whose distribution is defined as $\operatorname{Pois}(\lambda t)$. Its probability mass is defined as:
  \begin{equation}
    \label{eq:poission1D}
    P_\lambda(k,t)=\frac{(\lambda t)^k\ee{-\lambda t}}{k!},
  \end{equation}
  its survival function with killing boundary at $M>0$ is
  \begin{equation}
    \label{eq:survival1D}
    S_\lambda(t)=\sum_{k=0}^{M-1} P_\lambda(k,t),
  \end{equation}
  and by derivation the first passage time density is
  \begin{equation}
    \label{eq:fpt_1D}
    F_\lambda(t)=\sum_{k=0}^{M-1}\frac{\lambda^k t^{k-1}}{k!}(\lambda t-k)\ee{-\lambda t}.
  \end{equation}
  There are several generalization to many dimensions, we will consider one of the easiest ones, characterized by a simple parallelism with the two dimensional correlated Wiener process. The bivariate Poisson process~\cite{holgate_estimation_1964, kotz_discrete_2005} is formed by two correlated Poisson processes, $X_1$ and $X_2$, defined using three independent Poisson processes with intensities $\lambda_1$, $\lambda_2$, and $\lambda_{12}$ as
  \begin{align}
    &X_1=Y_1+Y_{12} &X_2=Y_2+Y_{12}.
  \end{align}
  This model has found some applications in the analysis of sports data~\cite{karlis_analysis_2003}.
  The joint probability mass function of this two dimensional process reads:
  \begin{equation}
    \label{eq:poisson_join_2D}
    P(x_1,x_2,t)=\frac{(\lambda_1t)^{x_1}(\lambda_2t)^{x_2}}{x_1! x_2!}\ee{-(\lambda_1+\lambda_2+\lambda_{12})t}\sum_{k=0}^{\min(x_1,x_2)}\binom{x_1}{k}\binom{x_2}{k}k!\left(\frac{\lambda_{12}}{\lambda_1\lambda_2}\right)^k\frac{1}{t^k},
  \end{equation}
  its marginal distributions are:
  \begin{align}
    &X_1\sim\operatorname{Pois}((\lambda_1+\lambda_{12})t), &X_2\sim\operatorname{Pois}((\lambda_2+\lambda_{12})t),
  \end{align}
  the Pearson correlation between $X_1$ and $X_2$ is given by:
  \begin{equation}
    \rho=\frac{\lambda_{12}}{\sqrt{(\lambda_1+\lambda_{12})(\lambda_2+\lambda_{12})}},
  \end{equation}
  therefore $\lambda_{12}$ plays a role similar (but not identical) to the correlation coefficient in the two dimensional correlated Wiener process.\\
  The conditional distribution is:
  \begin{equation}
    P^2_1(X_1=x,t|X_2=y,t)=\ee{-\lambda_1t}\sum_{j=0}^{\min(x,y)}\binom{y}{j}\left(\frac{\lambda_{12}}{\lambda_2+\lambda_{12}}\right)^j\left(\frac{\lambda_{2}}{\lambda_2+\lambda_{12}}\right)^{y-j}\frac{(\lambda_1t)^{x-j}}{(x-j)!},
  \end{equation}
  that is convolution between a Poisson and a binomial distribution~\cite{kotz_discrete_2005}.\\
  If we assume a killing barrier at $M>0$, the survival function describing the probability that both coordinates are below this barriers is
  \begin{equation}
    \label{eq:survivalp2d_first}
    S^2(t)=\sum_{x=0}^{M-1}\sum_{y=0}^{M-1}P(x,y,t)
  \end{equation}
  and the first passage time distribution of the two dimensional system (describing the time of first exit) is
  \begin{equation}
    \label{eq:fpt_first}
    \begin{aligned}
      &F^2(t)=\sum_{x=0}^{M-1}\sum_{y=0}^{M-1}\frac{\lambda_1^x\lambda_2^y}{x! y!}\sum_{k=0}^{\min(x,y)}\binom{x}{k}\binom{y}{k}\left(\frac{\lambda_{12}}{\lambda_1\lambda_2}\right)^kk!\\
      &[(\lambda_{1} + \lambda_{12} + \lambda_{2}) t + k-x - y] t^{x+y-k-1} \ee{-(\lambda_{1}+\lambda_{12}+\lambda_{2})t}.
    \end{aligned}
  \end{equation}
  Now if we assume, for example, that $X_1=M$ first at $\tau_m$; then for subsequent times the Poisson process $X_2$ will follow the law given by $\operatorname{Pois}(\lambda_2 t)$ and not by $\operatorname{Pois}((\lambda_2+\lambda_{12})t)$ anymore, since, with the killing of $X_1$, the meaning of $Y_{12}$ as the coupling parameters ceases to make sense. The bivariate Poisson model allows for the simultaneous killing of both $X$s as well.\\
  Thus our formula~\eqref{eq:S1_general} dictates that the survival function of the last particle to exit is:
  \begin{equation}
    \label{eq:survivalp2d_last}
    \begin{aligned}
      &S^1(t)=S^2(t)+\sum_{j=0}^{M-1} \int_0^td\tau \sum_{i=0}^{j} P_{\lambda_1}(j,t|i,\tau)P^2_1(i,\tau|M,\tau)F_{\lambda_2+\lambda_{12}}(\tau)\\
      &+\sum_{j=0}^{M-1} \int_0^td\tau \sum_{i=0}^{j} P_{\lambda_2}(j,t|i,\tau)P^2_2(i,\tau|M,\tau)F_{\lambda_1+\lambda_{12}}(\tau)      
    \end{aligned}
  \end{equation}
  where $i<M$.
  Now expanding and collecting the terms involving time
  \begin{equation}
    \begin{aligned}
      &\sum_{j=0}^{M-1}\sum_{i=0}^{j}\frac{\lambda_1^{j-i}}{(j-i)!}\sum_{k=0}^i\binom{M}{k}\left(\frac{\lambda_{12}}{\lambda_{12}+\lambda_2}\right)^k\left(\frac{\lambda_{2}}{\lambda_{12}+\lambda_2}\right)^{M-k} \frac{\lambda_1^{i-k}}{(i-k)!}\sum_{l=0}^{M-1}\frac{(\lambda_2+\lambda_{12})^l}{l!}\\
      &\int_0^t d\tau \ee{-\lambda_1(t-\tau)}\ee{-\lambda_1\tau}(t-\tau)^{j-i}\tau^{i-k+l-1}[(\lambda_2+\lambda_{12})\tau-l]\ee{-(\lambda_2+\lambda_{12})\tau}.
    \end{aligned}
  \end{equation}
  Therefore, we can define two helping integrals
  \begin{equation}
    H_1(t)=(\lambda_{12} + \lambda_{2})\ee{-\lambda_1t}\int_0^t d\tau (t-\tau)^{j-i}\tau^{i-k+l}\ee{-(\lambda_{12}+\lambda_{2})\tau}
  \end{equation}
  and
  \begin{equation}
    H_2(t)=-l\ee{-\lambda_1t}\int_0^t d\tau (t-\tau)^{j-i}\tau^{i-k+l-1}\ee{-(\lambda_{12}+\lambda_{2})\tau}
  \end{equation}
  whose solutions~\cite{gradshteyn_i_s_and_ryzhik_i_m_table_2007} can be written down analytically:
  \begin{equation}
    \int_0^t d\tau (t-\tau)^{\alpha}\tau^{\beta}\ee{-\lambda\tau}=B(\alpha+1,\beta+1)t^{\alpha+\beta+1}\, _1F_1(\beta+1,\alpha+\beta+2;-\lambda t)
  \end{equation}
  with $\alpha,\beta\geq0$, and the caveat that $H_2(t)=0$ if $l=0$.  $B(\alpha,\beta)$ denotes the Beta function and $_1F_1(p,q,t)$ the confluent hypergeometric function of the first kind. The first passage time  distribution can be obtained by derivation. See Fig.~\ref{fig:survivalp2D} for a comparison between the formulae obtained and a Monte Carlo simulation.
  \begin{figure}
   \centering
   \includegraphics[width=0.95\textwidth]{./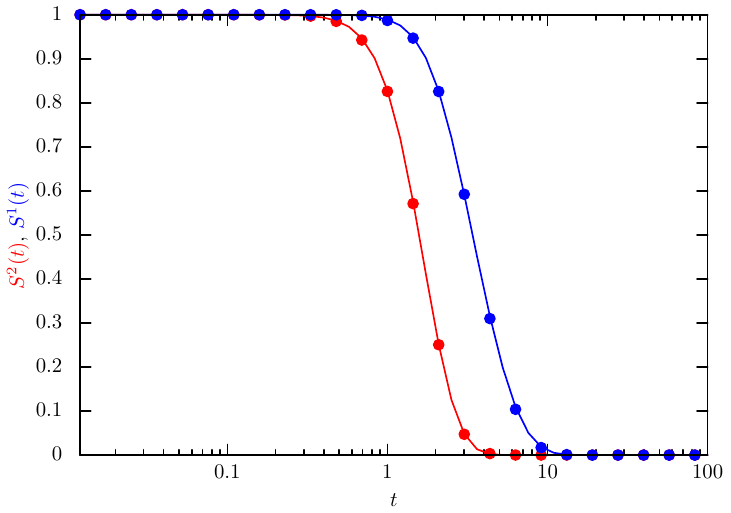}
   \caption{Comparison between a simulation (dots obtained averaging over $10,000$ realization) and the theory (solid lines) for a bivariate Poisson model with $\lambda_1=1$, $\lambda_2=2$, $\lambda_{12}=0.8$, and $M=5$. The red lines show the result for $S^2(t)$ (Eq.~\eqref{eq:survivalp2d_first}) while the blue ones for $S^1(t)$(Eq.~\eqref{eq:survivalp2d_last}).}\label{fig:survivalp2D}
 \end{figure}
  
  \subsection{A diffusion process example}
  The diffusion problem we tackle is the most simple variation of a single file diffusing in a box. This is a classic problem~\cite{harris_diffusion_1965,jepsen_dynamics_1965} that found numerous applications to various fields~\cite{hummer_water_2001, li_effects_2009}, in particular the exit-time of colloids in a single file has been analyzed both theoretically in~\cite{ryabov_single-file_2013} and experimentally in~\cite{locatelli_single-file_2016}.  In our variation, two identical Brownian particles diffuse in a box with a reflecting boundary condition at $0$ and a killing boundary condition at $1$ in the absence of an external potential. The two Brownian particles are subject to a ``hard-core'' interaction that does not allow them to cross each other. The transitional probability density $P^2(x_1,x_2,t|x_{01},x_{02},0)$ satisfies the Fokker-Planck equation (with unitary diffusion coefficient):
\begin{equation}
  \label{eq:single_fileFP}
  \begin{aligned}
    &\partialO{P^2}{t}=\partialT{P^2}{x_1}+\partialT{P^2}{x_2},\\
    &P^2(x_1,x_2,0|x_{01},x_{02},0)=\delta(x_1-x_{01})\delta(x_2-x_{02}),\\
    &\partial_{x_1}P^2(x_1,x_2,0|x_{01},x_{02},0)\left.\right|_{x_1=0}=0,\\
    &P^2(x_1,x_2,0|x_{01},x_{02},0)\left.\right|_{x_1=1}=0,\\
    &P^2(x_1,x_2,0|x_{01},x_{02},0)\left.\right|_{x_2=1}=0,\\
    &(\partial_{x_2}-\partial_{x_1})P^2(x_1,x_2,0|x_{01},x_{02},0)\left.\right|_{x_1=x_2}=0.
  \end{aligned}
\end{equation}
The Fokker-Planck equation can be solved considering the equivalent single particle solution and the ordering operator (represented by the Heaviside theta)
\begin{equation}
  \label{eq:solution_sf_2D}
  P^2(x_1,x_2,t|x_{01},x_{02},0)=2\Theta(x_2-x_1)\sum_{k_1,k_2=0}^\infty\phi_{k_1}(x_1)\phi_{k_2}(x_2)\phi_{k_1}(x_{01})\phi_{k_2}(x_{02})\ee{-\Lambda_{k_1,k_2}t};
\end{equation}
and
\begin{align}
  &\phi_{k}(x)=2\sqrt{2}\cos\left(\frac{(2k+1)\pi x}{2}\right),\\
  &\lambda_k=\frac{(2k+1)^2\pi^2}{4},\\
  &\Lambda_{k_1,k_2}=\lambda_{k_1}+\lambda_{k_2},
\end{align}
where $\phi_k(x)$ and $\lambda_k$ are the eigenfunctions and the eigenvalues of the single particle problem
\begin{equation}
  \label{eq:single_sol}
  P^1(x,t|x_0,0)=\sum_{k=0}^\infty \phi_k(x)\phi_k(x_0)\ee{-\lambda_k t}.
\end{equation}
To simplify the notation we consider a (arguably) more natural initial condition, where the two initial positions are drawn from the uniform distribution $\mathcal{U}$. In this case the transitional probability density can be expressed integrating over the initial conditions Eq.~\eqref{eq:solution_sf_2D} and reads~\cite{lapolla_first_2022}:
\begin{equation}
  \label{eq:single_equilibrium_2D}
   P^2(x_1,x_2,t|\mathcal{U},0)=2\Theta(x_2-x_1)\sum_{k_1,k_2=0}^\infty\phi_{k_1}(x_1)\phi_{k_2}(x_2)\Psi_{k_1,k_2}\ee{-\Lambda_{k_1,k_2}t},
 \end{equation}
 with
 \begin{equation*}
  \Psi_{k_1,k_2}=\frac{8 \, \left(-1\right)^{k_{1}} \left(-1\right)^{k_{2}}}{\pi^{2} {\left(2 \, k_{1} + 1\right)} {\left(2 \, k_{2} + 1\right)}}.
 \end{equation*}
 The transitional probability density of the last particle to exit, the leftmost, is then given by the sum of two terms: one describing the system before the rightmost particle is absorbed $N=2$ and the second describing the system when only the leftmost survives:
 \begin{equation}
   \label{eq:greensf_decomposition}
   P^2_1(x_1,t|\mathcal{U},0)=P^2_1(x_1,t|\mathcal{U},0; N=2)+P^1(x_1,t|\mathcal{U},0; N=1),
 \end{equation}
 the first addend is~\cite{lapolla_bethesf_2020}:
 \begin{equation}
   \label{eq:greensf_N2}
   P^2_1(x_1,t|\mathcal{U},0; N=2)=\sum_{k_1,k_2=0}^\infty V_{k_1,k_2}(x_1)\Psi_{k_1,k_2}\ee{-\Lambda_{k_1,k_2}t},
 \end{equation}
 with~\cite{lapolla_bethesf_2020}
 \begin{equation*}
   V_{k_1,k_2}(x)=2\phi_{k_1}(x)\int_{x_1}^1 dy \phi_{k_2}(y).
 \end{equation*}
 While the second addend is more complicated:
 \begin{equation}
   \label{eq:greensf_N1}
   P^2_1(x_1,t|\mathcal{U},0; N=1)=\int_0^t ds\int_0^1 dy P^1(x,t|y,s)P^2_1(y,s|\mathcal{U},x_2=1,s)F^2(s).
 \end{equation}
 The term $P_1^2(x_1,t|\mathcal{U},0;x_2=1)$ represent the transitional probability density of the first particle at the killing moment $\tau_2$ of the second particle, it reads:
 \begin{equation}
   \label{eq:conditioned_greensf}
   P_1^2(x_1,t|\mathcal{U},0;x_2=1)=\lim_{x_2\to1}\frac{P^2(x_1,x_2,t|\mathcal{U},0)}{P^2_2(x_2,t|\mathcal{U},0;N=2)},
 \end{equation}
 where the denominator is almost identical to Eq.~\eqref{eq:greensf_N2}, with the only difference that now~\cite{lapolla_bethesf_2020}
 \begin{equation*}
   V_{k_1,k_2}(x)=2\phi_{k_1}(x)\int_{0}^{x_1} dy \phi_{k_2}(y).
 \end{equation*}
 The limit in Eq.~\eqref{eq:conditioned_greensf} is a $0/0$ indeterminate form and can be solved using the l'H\^opital rule, therefore Eq.~\eqref{eq:conditioned_greensf} reads:
 \begin{equation}
   P_1^2(x_1,t|\mathcal{U},0;x_2=1)=\frac{2\sum_{k_1,k_2=0}^\infty\phi_{k_1}(x_1)\phi^\prime_{k_2}(1)\Psi_{k_1,k_2}\ee{-\Lambda_{k_1,k2}t}}{\sum_{k_1,k_2=0}^\infty V^\prime_{k_1,k_2}(1)\Psi_{k_1,k_2}\ee{-\Lambda_{k_1,k_2}t}},
 \end{equation}
 while~\cite{lapolla_first_2022}
 \begin{equation}
   F^2(\tau|\mathcal{U},0)=\sum_{k_1,k_2=0}^\infty \Lambda_{k_1,k_2}\Psi_{k_1,k_2}^2\ee{-\Lambda_{k_1,k_2}t}
 \end{equation}
 is the first passage time distribution of the rightmost particle. Expanding all the series and considering the integrals explicitly:
 \begin{equation}
   \label{eq:greensf_N1_2}
   P^2_1(x_1,t|\mathcal{U},0; N=1)=2\sum_{i=0}^\infty\phi_i(x_1)\sum_{k_2=0}^\infty\phi^\prime_{k_2}(1)\Psi_{i,k_2}\sum_{l_1,l_2=0}^\infty \Lambda_{l_1,l_2}\Psi_{l_1,l_2}^2 H(t,i,k_2,l_1,l_2),
 \end{equation}
 where one sum disappears due to the orthonormality of the single particle eigenfunction, and
 \begin{align}
   \label{eq:numerical_integral}
   &H(t,i,k_2,l_1,l_2)=\int_0^t d\tau \frac{\ee{-\lambda_i(t-\tau)}\ee{-(\Lambda_{i,k_2}+\Lambda_{l_1,l_2})\tau}}{C(\tau)},\\
   &C(\tau)=\sum_{j_1,j_2=0}^\infty V^\prime_{j_1,j_2}(1)\Psi_{j_1,j_2}\ee{-\Lambda_{j_1,j_2}\tau}.
 \end{align}
 Eq~\eqref{eq:numerical_integral} can only be evaluated numerically\footnote{The form of the integrand has been chosen for numerical stability, in this form only negative exponentials appear.}. The survival function of the leftmost particle $S^1(t|\mathcal{U})$ can be easily obtained integrating Eq.~\eqref{eq:greensf_decomposition} over $x_1$ (see Fig.~\ref{fig:survivalsf_1}).
 \begin{figure}
   \centering
   \includegraphics[width=0.95\textwidth]{./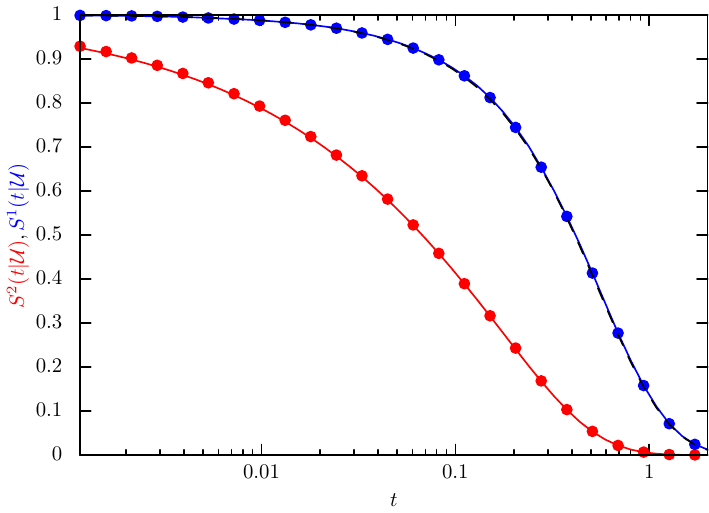}
   \caption{Comparison between a Brownian dynamics simulation ($100,000$ realizations and a time step of $10^{-6}$)  of a single file of two elements and the theoretical results. In particular we show the survival function of the leftmost particle in blue and of the rightmost in red. The dots are obtained using the simulation's results while the solid lines using our formulas. The black dashed line has been obtained using the result in~\cite{locatelli_active_2015}(see Eq.~\eqref{eq:locatelliformula}).}\label{fig:survivalsf_1}
 \end{figure}
 We note that our result in Eq.~\eqref{eq:greensf_N1_2} is identical to the results reported in~\cite{locatelli_active_2015} (dashed black line in Fig.~\ref{fig:survivalsf_1}). Their solution exploits the symmetry of the problem via the ``Reflection Principle'' and reads, for the two particles case:
 \begin{align}
   \label{eq:locatelliformula}
   &S^1(t)=2f(s)(1-s(t))+s(t)^2 &s(t)=\frac{8}{\pi^2}\sum_{k=0}^\infty\frac{\ee{-\lambda_k t}}{(2k+1)^2}
 \end{align}
 where $s(t)$ is the single particle survival function.
 The first passage time distribution $F^1(t|\mathcal{U})=-\partialO{S^1}{t}$ can be obtained using the Leibniz integral rule on Eq.~\eqref{eq:numerical_integral}:
 \begin{equation}
   \label{eq:FPTsf_1}
   F^1(t|\mathcal{U})=F^2(t|\mathcal{U}) + \sum_{i=0}^\infty\chi_i\sum_{k_2=0}^\infty\phi^\prime_{k_2}(1)\Psi_{i,k_2}\sum_{l_1,l_2=0}^\infty \Lambda_{l_1,l_2}\Psi_{l_1,l_2}^2 H^\prime(t,i,k_2,l_1,l_2),
 \end{equation}
 where 
 \begin{align}
   \label{eq:numerical_integral1}
   &H^\prime(t,i,k_2,l_1,l_2)=\lambda_i\int_0^t d\tau \frac{\ee{-\lambda_i(t-\tau)}\ee{-(\Lambda_{i,k_2}+\Lambda_{l_1,l_2})\tau}}{C(\tau)}-\frac{\ee{-(\Lambda_{i,k_2}+\Lambda_{l_1,l_2})t}}{C(t)},\\
   &\chi_i=-\frac{\sqrt{2}\pi(1+ 2i)(-1)^i}{2}
 \end{align}

 \section{The many coordinates case}
 Eq.~\eqref{eq:S1_general} is combinatorial in nature, the final result has been obtained considering all \emph{possible}\footnote{In the single file example the leftmost particle cannot be the first absorbed, in fact Eq.~\eqref{eq:greensf_N1_2} has one less addend than Eq.~\eqref{eq:S1_general}.} ways the last coordinate can be killed.\\
 \begin{figure}
   \label{fig:graph}
   \centering
   \begin{tikzpicture}[scale=1.5]
     \node (111) at (0,0) {AAA};
     \node (011) at (2,2) {DAA};
     \node (101) at (2,0) {ADA};
     \node (110) at (2,-2) {DDA};
     \node (001) at (4,3) {DDA};
     \node (010) at (4,0.5) {DAD};
     \node (100) at (4,-3) {ADD};
     \node (000) at (6,0) {DDD};
     \graph[edges={thick}]
     { (111) ->{ (011)->{(001)->(000),(010)->(000)}, (101)->{(100)->(000),(001)},(110)->{(100),(010)}};
       (111) ->[bend left] (001);
       (111) ->(010);
       (111) ->[bend right] (100);
     };
 \end{tikzpicture}
    \caption{The graph $\Gamma$ relative to the trivariate Poisson model in Section~\ref{sec:tri_poisson}. $A$
labels the alive coordinates and $D$ the killed ones.}
\end{figure}
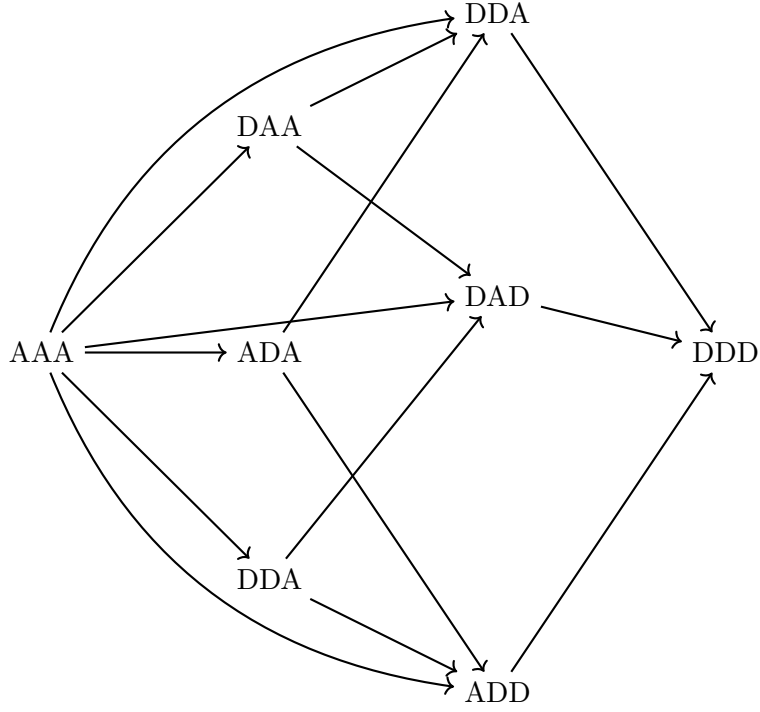
Consider a system of $N$ Markovian coordinates and a directed graph $\Gamma$ (see Fig.~\ref{fig:graph} for an example), each node corresponds to a finite binary string of $N$ elements, where $A$ labels the alive coordinates and $D$ the killed ones. Then defining $\mathcal{G}$ as the set of all paths that connect the different states and $\mathcal{G}_n$ the set of all paths connecting the initial state (the one labeled by a string with only $A$s) to the states with exactly $n$ $A$s. Then we can define:
 \begin{definition}[Path survival function contribution]
   Let $s\in\Gamma$ be a node with only $n$ surviving coordinates and let $g \in \mathcal{G}_n$ be a path passing through the following time-ordered collection of nodes and hitting times pairs: \\ $\{(s_0,0),(s_1,\tau_1),\ldots,(s_{n-1},\tau_{n_1}),(s,t)\}$, where $s_0$ is the initial state where no coordinate has been killed yet. Then the contribution to the $\nth$ survival function given by $g$ is defined by the following integral:
   \begin{equation}
     \label{eq:partial_nth_survival}
     \begin{aligned}
       I_g(t)=&\int_{R^n}d\mathbf{x}\int_0^t d\tau_{n-1}\int_0^{\tau_{n-1}}d\tau_{n-2}\cdots\int_0^{\tau_2}d\tau_1 \int_{R^n}d\mathbf{y}\\
              &P^n(\mathbf{x},t|\mathbf{y},\tau_{n-1}) \times\\
       &P^{N}_n(\mathbf{y},\tau_{n-1}|X_{(n-1)\textsuperscript{th}}\in\partial R,\tau_{n-1};\ldots;X_{1\textsuperscript{st}}\in\partial R,\tau_1)\times\\
       &F_{n+1}(\tau_{n-1})\cdots F_{N}(\tau_1);
     \end{aligned}     
   \end{equation}
   where the subscripts of the $X$s label the order of killings given by $g$.
 \end{definition}
 \begin{remark}
   In Eq.~\eqref{eq:partial_nth_survival} $F_{N}$ denotes the first passage time distribution describing the first coordinate being killed, $F_{N-1}$ denotes the first passage time distribution describing the second coordinate being killed in the original system, that is the first coordinate to be killed in a system of $N-1$ coordinates since it is constrained on being without $X_{1\textsuperscript{st}}$ for times larger than $\tau_1$ and so on\ldots
 \end{remark}
 Thus we can finally prove the following:
 \begin{theorem}
 \label{theo:many}
   The $\nth$ survival function can be computed with the following formula:
   \begin{equation}
     \label{eq:survival_n_general}
     S^n(t)=S^{n+1}(t) + \sum_{g\in\mathcal{G}_n}I_g(t),
   \end{equation}
   while the $\nth$ first passage time distribution can be obtained via derivation $F^n(t)=-\partialO{S^n}{t}$.
 \end{theorem}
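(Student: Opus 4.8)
The plan is to generalise the proof of Theorem~\ref{theo:two} by reading $S^n(t)$ as the probability that \emph{at least} $n$ of the $N$ coordinates are still alive at time $t$, equivalently $S^n(t)=\mathbb{P}\!\left(t<\tau_{(N-n+1)}\right)$ where $\tau_{(k)}$ is the $k$-th order statistic of the hitting times. With this reading Eq.~\eqref{eq:survival_n_general} is a one-step recursion, and the first move is the disjoint decomposition
\begin{equation*}
  \{\text{at least } n \text{ alive at } t\}=\{\text{at least } n+1 \text{ alive at } t\}\ \sqcup\ \{\text{exactly } n \text{ alive at } t\},
\end{equation*}
which gives $S^n(t)=S^{n+1}(t)+\mathbb{P}(\text{exactly } n \text{ alive at } t)$ at once. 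The theorem thus reduces to the identity $\mathbb{P}(\text{exactly } n \text{ alive at } t)=\sum_{g\in\mathcal{G}_n}I_g(t)$, the exact analogue of the step in Theorem~\ref{theo:two} where $\mathbb{P}(\tau_m<t<\tau_M)$ was split into the two complementary order-of-killing events.

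Next I would partition $\{\text{exactly } n \text{ alive at } t\}$ by the full killing history. Having exactly $n$ survivors at $t$ means precisely $N-n$ coordinates are dead, and on the event that all hitting times are distinct these deaths occur in a definite order; each such order, together with the identity of the coordinate removed at every step, is in bijection with a path $g\in\mathcal{G}_n$ of $\Gamma$ from the all-$A$ root to a node carrying exactly $n$ $A$s (simultaneous deaths correspond to the multi-step ``skip'' edges of $\Gamma$ and are absorbed by the same additivity). Since distinct paths describe mutually exclusive and exhaustive sub-events, $\mathbb{P}(\text{exactly } n \text{ alive at } t)=\sum_{g\in\mathcal{G}_n}\mathbb{P}(g)$, and it remains to prove $\mathbb{P}(g)=I_g(t)$.

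To establish $\mathbb{P}(g)=I_g(t)$ I would condition on the ordered killing times $0<\tau_1<\cdots<\tau_{N-n}<t$ prescribed by $g$ and integrate them out through the nested, time-ordered integrals of Eq.~\eqref{eq:partial_nth_survival}. The engine is repeated use of the strong Markov property at each inaccessible stopping time: at the $j$-th killing the dying coordinate emits its first-passage factor $F$, the survivors restart from their current configuration, and the reduced ensemble thereafter evolves as an autonomous lower-dimensional Markov process no longer coupled to the dead coordinate. Chaining these restarts produces, reading from the last killing backwards, the product of first-passage densities $F_{n+1}\cdots F_{N}$ (with $F_N$ attached to the earliest death), the conditional density $P^{N}_n$ of the $n$ survivors at the final killing time given the entire killing record, and the free $n$-coordinate propagator $P^{n}(\mathbf{x},t|\mathbf{y},\,\cdot\,)$ carrying them to $t$; integrating over the intermediate and terminal configurations $\mathbf{y},\mathbf{x}$ assembles exactly $I_g(t)$. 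Summing over $g$ and adding $S^{n+1}(t)$ yields Eq.~\eqref{eq:survival_n_general}, and $F^n(t)=-\,\partialO{S^n}{t}$ follows by differentiation.

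I expect the main obstacle to be the bookkeeping of this last step rather than any isolated estimate. One must justify, at each of the $N-n$ stopping times, that the survivors genuinely form an autonomous Markov system governed by the successively reduced propagator --- the ``equations of motion change'' feature flagged in the introduction, and precisely what licenses the single flat integral $I_g$ to stand in for $N-n$ chained conditionings; one must verify that the density at a killing instant is indeed the $P^{N}_n(\cdots|X\in\partial R,\cdots)$ factor, obtained as the appropriate $0/0$ limit as a coordinate reaches $\partial R$ (compare Eq.~\eqref{eq:conditioned_greensf}); and one must control the interchange of the finite path-sum with the nested integrals by Fubini, alongside the standing hypotheses (inaccessibility of the stopping times, and absolute continuity of the marginal survival functions so the $F$'s exist, with the Lebesgue--Stieltjes amendment otherwise). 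A clean way to make this rigorous is induction on $N-n$: the base case $n=N$ is the tautology $S^N=S^N$, and the inductive step peels off the final killing, reducing a path in $\mathcal{G}_n$ to one in $\mathcal{G}_{n+1}$ for an $(N-1)$-coordinate subsystem and invoking the two-coordinate reasoning of Theorem~\ref{theo:two} one removal at a time.
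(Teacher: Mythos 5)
Your proposal is correct and takes essentially the same approach as the paper: the paper's own proof is a two-sentence sketch that obtains $S^{N-1}(t),\ldots,S^n(t)$ recursively by reapplying the argument of Theorem~\ref{theo:two} and enumerating all killing paths ending in a state with $n$ survivors, which is exactly the path decomposition and chained strong-Markov conditioning (and the closing induction on $N-n$) that you spell out in greater detail.
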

 \begin{proof}
   Given $S^N(t)$, $S^{N-1}(t)$ can be obtained using the same steps in Theorem~\ref{theo:two} considering all the possible ways the first coordinate may be killed. Then $S^{N-1}(t),\ldots,S^n(t)$ can be obtained recursively enumerating all the possible paths that end up in states with only $n$ coordinates not killed yet.
 \end{proof}
 Due to its recursive nature, the result in Eq.~\eqref{eq:survival_n_general} is rather cumbersome since requires the calculation of all the path contributions, not only of the ones ending with $n$ coordinates but also for all the others with more remaining coordinates.
 \subsection{The trivariate Poisson process}
 \label{sec:tri_poisson}
 The straightforward generalization of the previously analyzed bivariate Poisson model is the trivariate Poisson model~\cite{kawamura_structure_1976}:
 \begin{equation}
  \label{eq:trivariate}
  \begin{aligned}
    &X_1=Y_1+Y_{12}+Y_{13},\\
    &X_2=Y_2+Y_{12}+Y_{23},\\
    &X_3=Y_3+Y_{13}+Y_{23},
  \end{aligned}
\end{equation}
all $Y$s are Poisson processes with intensity $\lambda_i$ or $\lambda_{ij}$. It is also possible to extend the model adding another Poisson process $Y_{123}$ to the three equations in Eq.~\eqref{eq:trivariate} to consider a three-way correlation contribution as well. Defining $a=\lambda_1+\lambda_2+\lambda_3+\lambda_{12}+\lambda_{13}+\lambda_{23}$, the probability mass function reads:
\begin{equation}
  \label{eq:trivariate_poisson_PMF}
  \begin{aligned}
    &P^3(X_1=x_1,X_2=x_2,X_3=x_3,t|X_1=0,X_2=0,X_3=0,0)=\ee{-at}\\
    &\sum_{y_{12},y_{13},y_{23}\in D}\frac{(\lambda_1t)^{x_1-y_{12}-y_{13}}(\lambda_2t)^{x_2-y_{12}-y_{23}}(\lambda_3t)^{x_3-y_{13}-y_{23}}(\lambda_{12}t)^{y_{12}}(\lambda_{13}t)^{y_{13}}(\lambda_{23}t)^{y_{23}}}{(x_1-y_{12}-y_{13})!(x_2-y_{12}-y_{23})!(x_3-y_{13}-y_{23})!y_{12}!y_{13}!y_{23}!}
  \end{aligned}
\end{equation}
where the summation is over the set
\begin{equation*}
  D=\{(y_{12},y_{13},y_{23})\in\mathbb{N}^3 : \{y_{12}+y_{13}\leq x_1\} \cup \{y_{12}+y_{23}\leq x_2\} \cup\{y_{13}+y_{23}\leq x_3\} \neq\emptyset\}.
\end{equation*}
The properties of the trivariate model are essentially the same of the bivariate case~\cite{kawamura_structure_1976}. To simplify the summation over $D$, the killing barrier will be placed at $M=1$, this choice will also allow us to directly analyze the $\nth$-CDS in the next section; so the model becomes,\emph{de facto}, equivalent to the multivariate exponential variable model by Marshall and Olkin~\cite{marshall_multivariate_1967}. The probability that all $X$s are $0$ is the survival function of all three coordinates, and it is given by:
\begin{equation}
  \label{eq:S3_tri}
  S^3(t)=\ee{-at}.
\end{equation}
The  survival function of two coordinates is then given by Eq.~\eqref{eq:survival_n_general} and reads:
\begin{equation}
  \label{eq:S2_tri}
  \begin{aligned}
    S^2(t)&=S^3(t)+I_{\{(X_1=1,X_2=0,X_3=0,t)|(X_1=0,X_2=0,X_3=0,0)\}}(t)+\\
          &I_{\{(X_1=0,X_2=1,X_3=0,t)|(X_1=0,X_2=0,X_3=0,0)\}}(t)+\\
          &I_{\{(X_0=1,X_2=0,X_3=1,t)|(X_1=0,X_2=0,X_3=0,0)\}}(t)
  \end{aligned}
\end{equation}
where the three paths (see Fig.~\ref{fig:graph}) consider the cases where only one coordinate jumps to $1$. The additional addends are:
\begin{equation}
  \label{eq:G_1_1}
  \begin{aligned}
    &I_{\{(X_i=1,X_j=0,X_r=0,t)|(X_i=0,X_j=0,X_r=0,0)\}}(t)\\
    &=\int_0^t d\tau P^2_{jr}(0,0,t|0,0,\tau)P^2_{j,r}(0,0,\tau|1,\tau)F_{\lambda_i}(\tau)=\\
    &\int_0^\tau d\tau \ee{-(\lambda_j+\lambda_r+\lambda_{jr})(t-\tau)}\ee{-(\lambda_j+\lambda_r+\lambda_{jr}+\lambda_{ij}+\lambda_{ir})\tau}\lambda_i\ee{-\lambda_i \tau}\\
    &=\frac{\lambda_i}{\lambda_i+\lambda_{ir}+\lambda_{ij}}\left(\ee{-(\lambda_j+\lambda_r+\lambda_{jr})t}-\ee{-at}\right);
  \end{aligned}
\end{equation}
where the conditional probability mass function $P^2_{j,r}(0,0,\tau|1,\tau)$ can be easily obtained using the independence among all $Y$s Poisson processes and the definition of conditional probability.\\
Finally the survival function of the last coordinate is again given by Eq,~\eqref{eq:survival_n_general}:
\begin{equation}
  \label{eq:S3_tri}
  \begin{aligned}
    S^1(t)&=S^2(t)+\\
          &\sum_{C_{\{i,j,r\}\in\{1,2,3\}}} I_{\{(X_i=1,X_j=1,X_r=0,t),(X_i=1,X_j=0,X_r=0,t_1),(X_i=0,X_j=0,X_r=0,0)\}}(t)+ \\
          &I_{\{(X_i=1,X_j=1,X_r=0,t),(X_i=0,X_j=1,X_r=0,t_1),(X_i=0,X_j=0,X_r=0,0)\}}(t)+\\
    &I_{\{(X_i=1,X_j=1,X_r=0,t),(X_i=0,X_j=0,X_r=0,0)\}}(t);
  \end{aligned}
\end{equation}
where $\sum_{C_{\{i,j,r\}\in\{1,2,3\}}}$ denotes the sum over the three combinations of $\{1,2,3\}$ (in total there are nine $I$ addends, as many as the paths connecting $AAA$ to $ADD$, $DAD$, and $DDA$ in Fig.~\ref{fig:graph}). The first two $I$ addends denote the paths where two coordinates are killed at different times, while the third the path where two coordinates are killed simultaneously, this is a feature of the model. Explicitly:
\begin{equation}
  \label{eq:G_2_1}
  \begin{aligned}
    &I_{\{(X_i=1,X_j=1,X_r=0,t),(X_i=1,X_j=0,X_r=0,t_1),(X_i=0,X_j=0,X_r=0,0)\}}(t)\\
    &\int_0^t d\tau_2 \int_0^{\tau_2}d\tau_1 P^1_{r}(0,t|0,\tau_2)P^1_{r}(X_r=0,\tau_2|X_j=1,\tau_2;X_i=1,\tau_1)F_{\lambda_i}(\tau_1)F_{\lambda_j}(\tau_2)=\\
    &\lambda_i\lambda_j\ee{-\lambda_r t}\int_0^t d\tau_2 \ee{-(\lambda_j+\lambda_{jr})\tau_2}\int_0^{\tau_2}d\tau_1\ee{-(\lambda_i+\lambda_{ij}+\lambda_{ir})\tau_1}=\\
    &\frac{\lambda_i\lambda_j}{(\lambda_i+\lambda_{ir}+\lambda_{ij})(\lambda_j+\lambda_{jr})}\left(\ee{-\lambda_r t}-\ee{-(\lambda_j+\lambda_r+\lambda_{jr})t}\right)-\\
    &\frac{\lambda_i\lambda_j}{(\lambda_i+\lambda_{ir}+\lambda_{ij})(a-\lambda_r)}\left(\ee{-\lambda_r t}-\ee{-at}\right),
  \end{aligned}
\end{equation}
where the notation of the conditional probability mass has been expanded for sake of clarity. And
\begin{equation}
  \label{eq:G_2_2}
  \begin{aligned}
    &I_{\{(X_i=1,X_j=1,X_r=0,t),(X_i=0,X_j=0,X_r=0,0)\}}(t)\\
    &=\int_0^t d\tau P^1_{r}(0,t|0,\tau)P^1_{r}(0,\tau|1,1,\tau)F_{\lambda_{ij}}(\tau)=\\
    &\int_0^t d\tau \ee{-\lambda_r(t-\tau)}\ee{-(\lambda_i+\lambda_j+\lambda_{r}+\lambda_{ir}+\lambda_{jr})\tau}\lambda_{ij}\ee{-\lambda_{ij}\tau}=\frac{\lambda_{ij}}{a-\lambda_r}\left(\ee{-\lambda_r t}-\ee{-at}\right).
  \end{aligned}
\end{equation}
In Fig.~\ref{fig:survival_poisson_3D} we show the comparison between the formulae just derived with a simulation results. In particular, in the bottom panel, we see how, in the case of time-scale separation between the driving processes, the $S^1(t)$ function presents a less trivial shape with a hump.
\begin{figure}
   \centering
   \includegraphics[width=0.95\textwidth]{./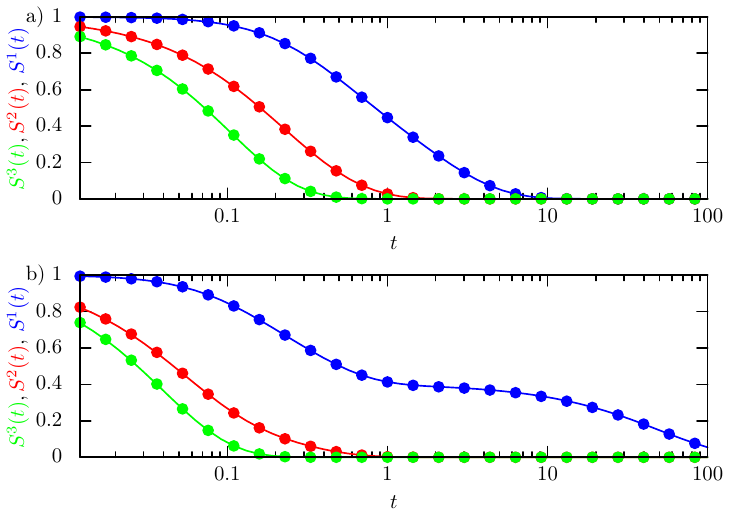}
   \caption{Comparison between a Monte Carlo simulation ($10,000$ realizations) and the theoretical survival functions of two trivariate Poisson processes. $S^3(t)$ in green, $S^2(t)$ in red, and $S^1(t)$ in blue. The dots are obtained using the simulation's results while the solid lines using our formulas. In the top panel:$\lambda_1=1.2,\lambda_2=0.5,\lambda_3=3.3,\lambda_{12}=1.4,\lambda_{13}=3.1,\lambda_{23}=0.12$, while in the bottom: $\lambda_1=5.3,\lambda_2=0.02,\lambda_3=3.3,\lambda_{12}=10.4,\lambda_{13}=5.1,\lambda_{23}=1.12$.}\label{fig:survival_poisson_3D}
 \end{figure}
 The generalization to more coordinates for the multivariate Poisson process~\cite{kawamura_structure_1979} is straightforward but tedious due to the large number of paths that must be taken into account.
 \subsection{The $\nth$-CDS pricing}
 \
 CDSs are derivative products that define two legs: the fee leg where the protection buyer pays at regular intervals $T_1,\ldots,T_L$ a given amount, the swap spread $u$, until its maturity $T=T_L$, and the protection leg where the protection seller pays a predetermined amount of money in the case of default before the maturity of the CDS reference entity. Pricing these products is of great importance to practitioners. A large amount of models and techniques have been developed for this task. Roughly speaking there are two main types of models: structural models and intensity models. Intensity models are often preferred by practitioners, and, in the following, we will concentrate on a very simple example among these. In these models the default is described by the first jump of a suitable jump process. Further information on intensity model based credit derivatives are available for example in~\cite{brigo_interest_2006}.\\
 The $\nth$-CDSs are more exotic over the counter contracts that work analogously to the single name CDSs. In the very simplified form we will use in this article\footnote{Since these are over the counter products,
   they can be customized according to the buyer and seller preferences and/or possibilities.}, these products reference $N$ different entities; the fee leg is the same with respect to the single name case, while the protection leg is triggered only when $n$ entities default. The proper definition, calibration, and use of the default correlations are the key building blocks of the problem~\cite{zhou_analysis_2001}, we will not touch the topic in this article and only focus on the valuation task. Semi-analytical formulae to price them has been developed in~\cite{hull_valuation_2004,mortensen_semi-analytical_2006} and copula models are also popular for pricing them~\cite{li_default_2000, madan_credit_2006, laurent_basket_2005}, however these authors' results consider the time-sequence of defaults (paths in $\Gamma$) differently from us. In particular Giesecke~\cite{giesecke_simple_2003} uses an intensity model that is equivalent to the multivariate Poisson model we just examined with killing barrier at $1$ for the valuation of multi-names CDSs. Note that the model we use allows for the simultaneous defaults of two entities, this is not necessarily a limitation of the model but an interesting feature with applications to engineering, insurance, and of course finance~\cite{protter_stopping_2024, gueye_dependent_2025}. Nevertheless, the model we use here is an oversimplification that may not capture all salient features of these derivatives' market and has been chosen in virtue of its analytical tameness. For example, a consequence of our setup is that after the non-simultaneous default of one component, e.g. $X_i$ jumps to $1$ because $Y_i$ in Eq.~\eqref{eq:trivariate} jumped to $1$, all the cross-Poisson processes labeled by the $i$ subscript disappear as well, they are not part anymore of the subsequent ``equation of motion''. Our way of considering the defaults' sequence naturally encodes the fact that different sequences may have different financial effects, for example is reasonable to think that: if the default of the larger or most important entity happens before the smaller entities' defaults in a multi-name CDS, then the surviving ones will feel a larger change in their intensities with respect to the case in which a less important entity defaults first. Again, we do not claim that our simple model specification is quantitatively accurate enough to describe such financial dynamics. Indeed, in our model, the intensities of the remaining components become smaller and therefore their average time to default larger. This is not necessarily the desired behavior after a default event. Nonetheless, it is  possible to modify the model introducing a default-dependent term structure of the intensities without greatly compromise the analytical tractability of the model, as long as the intensities are kept piece-wise constant.\\
 At inception, under the risk neutral measure and assuming constant short rate $r$ and independence between the interest rate process and the credit process, the fee leg can be written as~\cite{giesecke_simple_2003, brigo_interest_2006}
 \begin{equation}
   \label{eq:fee_leg}
   f=u\sum_{i=1}^L\ee{-rT_i}S^{N-n+1}(T_i),
 \end{equation}
 and the protection leg as
 \begin{equation}
   \label{eq:premium_leg}
   p=-\int_0^Tdt \ee{-r t}F^{N-n+1}(t).
 \end{equation}
 The fair value of these products is given by the sum of these two legs. However, market participants usually quote the prices of these products in terms of the swap spread, that is the value of $u$ that forces $f+p=0$ at inception Eqs.~\eqref{eq:fee_leg} and~\eqref{eq:premium_leg} can be easily computed using the results in the previous section.
 \begin{table}[h!]
   \centering
   \begin{tabular}{lccc}
     \toprule
     Default type& Model A& Model B& Model C\\
     \midrule
     $1\textsuperscript{st}$ to default& 1822    &42.48 & 42.48\\
     $2\textsuperscript{nd}$ to default& 41.84   &38.84 & 4.61\\
     $3\textsuperscript{rd}$ to default& 2.30    &0.06  & 0.66\\
     \bottomrule
   \end{tabular}
   \caption{The swap spreads $u$ for the three specifications of a CDS with three underlyings. In model A all single intensities $\lambda_i=2.5$ and cross-intensities $\lambda_{ij}=2.5$, in model B the single intensities are reduced to $0.01$, while in model C the cross-intensities are reduced to $0.01$. We report the spreads for the first, second, and third to default CDSs. $r=0.02$, $T=5$, and payments every $0.5$ time units. All units are arbitrary.
   }
   \label{tab:swap_spreads}
 \end{table}
 In Tab.~\ref{tab:swap_spreads} we show how the swap spreads of different $\nth$-CDSs change as we change the model specification, assuming three underlyings for the contracts. In model A we assume that all intensities are the same, that is simultaneous defaults and single defaults are equally likely, then the spread difference is rather large among the three contracts. On the other hand, if we make the single name defaults much less likely than the simultaneous defaults, only the gap between the second and third-to-default CDS is large (model B), while if we do the vice versa (model C) the gap between the first and the second-to-default CDS is large.

 \section{Conclusion}
 The main results of the article are theorems~\ref{theo:two} and~\ref{theo:many}. The reader may wonder what is the point of such inconvenient answers~\cite{wilf_what_1982}, for example the simulation of the blue line in Fig.~\ref{fig:survivalsf_1} is only three times slower then the implementation of Eq.~\eqref{eq:greensf_N1_2}, and the latter is infinitely slower and more complicated than the clever solution of Locatelli et al.~\cite{locatelli_active_2015} in Eq.~\eqref{eq:locatelliformula}. However, we argue that the main contribution of our theorems is that they highlight the non-Markovian nature of the problem, in fact Eq.~\eqref{eq:partial_nth_survival} shows explicitly how the history of the process enters the solution via the ``integrals over the paths'' in $\Gamma$. This path dependence has been leveraged to tackle the valuation of multi-name credit derivatives, we hope that this way of thinking about these problems can inspire more accurate pricing method in the space. We also hope that our results may spur more discoveries on a more fundamental level. We believe that our work could be used to unveil a richer phenomenology in the persistence exponents~\cite{bray_persistence_2013,andersen_persistence_2015}  or that it could inspire some approximate solutions, especially in the large number of interacting particle systems, perhaps up to the mean-field limit~\cite{guo_mean-field_2023}, or valuation of collateralized debt obligations~\cite{hull_valuation_2004, laurent_basket_2005}; these large scale applications are likely beyond any reasonable use of our formulae.
 
 \bibliographystyle{plain}
 \bibliography{nthFPT} 
\end{document}